\theoremstyle{break}
\newtheorem{thm}{Theorem}
\newtheorem{lem}[thm]{Lemma}   
\newtheorem{cor}[thm]{Corollary}   
\newtheorem{prop}[thm]{Proposition}  
\newtheorem{conj}[thm]{Conjecture}
\newtheorem{remark}[thm]{Remark}
\newtheorem{defi}[thm]{Definition}
\theoremstyle{nonumberplain} 
\newtheorem{proof}{Proof.}
\DeclareMathOperator{\aff}{aff}
\DeclareMathOperator{\conv}{conv}
\DeclareMathOperator{\NH}{NH}
\newcommand{\Ol}{\mathcal O}
\newcommand{\mach}[1]{|#1|}
\newcommand{\witho}{\backslash}
\newcommand{\shadow}{\mathcal S}
\newcommand{\scalefactor}{.6}
\newcommand{\embH}[2]{\mathcal{H}_{#1,#2}}
\newcommand{\embHP}[2]{\mathcal{H}_{#1,#2}^{\text{\normalfont\textsf{PL}}}}
\newcommand{\str}{^{\text{\normalfont\textsf{s}}}}
\newcommand{\wea}{^{\text{\normalfont\textsf{w}}}}
\renewcommand{\phi}{\varphi}
\renewcommand{\epsilon}{\varepsilon}
\definecolor{rred}{rgb}{.8,0,0}
\title{Coloring \(d\)-Embeddable \(k\)-Uniform Hypergraphs}
\newcounter{foo}
\author[1,\fnsymbol{foo}]{Carl Georg Heise}\addtocounter{foo}{1}
\author[2]{Konstantinos Panagiotou}%\addtocounter{foo}{1}
\author[3,\fnsymbol{foo}]{Oleg Pikhurko}\addtocounter{foo}{1}
\author[1,\fnsymbol{foo}]{Anusch Taraz}\addtocounter{foo}{1}
\affil[1]{\footnotesize Institut für Mathematik, Technische Universität Hamburg-Harburg, Germany, \{carl.georg.heise, taraz\}@tuhh.de}
\affil[2]{\footnotesize Mathematisches Institut, Ludwig-Maximilians-Universität München, Germany, kpanagio@math.lmu.de}
\affil[3]{\footnotesize Mathematics Institute and DIMAP, University of Warwick, Coventry% CV4 7AL
, UK, o.pikhurko@warwick.ac.uk}
\date{November 27, 2014}
\begin{document}
\maketitle%                  Erzeugt die Titelseite
\thispagestyle{first}

%%TITLEHACK
\let\oldthefootnote\thefootnote
\renewcommand{\thefootnote}{\fnsymbol{footnote}}
\footnotetext[2]{Partially supported by the ENB graduate program TopMath and DFG grant GR 993/10-1. The author gratefully acknowledges the support of the TUM Graduate School's Thematic
Graduate Center TopMath at the Technische Universität München}
%\footnotetext[2]{lorem ipsum}
\footnotetext[3]{Partially supported by the Engineering and Physical Sciences Research Council (grant EP/K012045/1), the Alexander von Humboldt Foundation, and the European Research Council (grant No. 306493)}
\footnotetext[4]{Partially supported by DFG grant TA 309/2-2.}
\let\thefootnote\oldthefootnote
\section*{Abstract}

This paper extends the scenario of the Four Color Theorem in the following way. Let $\embH{d}{k}$ be the set of all $k$-uniform hypergraphs that can be (linearly) embedded into $\RR^d$. We investigate lower and upper bounds on the maximum (weak) chromatic number of hypergraphs in $\embH{d}{k}$. For example, we can prove that for $d\geq3$ there are hypergraphs in $\embH{2d-3}{d}$ on $n$ vertices whose chromatic number is $\Omega(\log n/\log\log n)$, whereas the chromatic number for $n$-vertex hypergraphs in  $\embH{d}{d}$ is bounded by $\Ol(n^{(d-2)/(d-1)})$ for $d\geq3$.
%\newpage
\section{Introduction}
\label{sec:intro}

The Four Color Theorem \cite{FCT1,FCT2} asserts that every graph that is embeddable in the plane has chromatic number at most four. This question has been one of the driving forces in Discrete Mathematics and its theme has inspired many variations.
For example, the chromatic number of graphs that are embedabble into a surface of fixed genus has been intensively studied by Heawood~\cite{Heawood}, Ringel and Youngs~\cite{ringelyoungs}, and many others.  

In this paper, we consider $k$-uniform hypergraphs that are embeddable 
into $\RR^d$ in such a way that their edges do not intersect (see Definition~\ref{def:dembed} below). For $k=d=2$ the problem specializes to graph planarity. For $k=2$ and $d\geq3$ it is not a very interesting question because for any $n\in\NN$ the vertices of the complete graph $K_n$ can be embedded into $\RR^3$ using the embedding
\begin{equation}
\label{eq:knR3}
\phi(v_i)=\left(i,i^2,i^3\right)\quad\forall i\in\{1,\ldots,n\}.
\end{equation}
It is a well known property of the moment curve $t\mapsto(t,t^2,t^3)$ that any two edges between four distinct vertices do not intersect (see Proposition~\ref{pro:shephardPlane}).

As a consequence, we now focus our attention on hypergraphs, which are in general not embeddable into any specific dimension.
Some properties of these hypergraphs (or more generally simplicial complexes) have been investigated (see e.\,g.~\cite{Flores1,Flores,WagnerHard,Menger,schild,VanKampen}), 
but to our surprise, we have not been able to find any previously established results which bound their chromatic number. However, Grünbaum and Sarkaria (see \cite{grunb,sarkaria})
have considered a different generalization of graph colorings to simplicial complexes by coloring faces. 
They also bound this face-chromatic number subject
to embeddability constraints.

Before we can state our main results, we quickly recall and introduce some useful notation. 
We say that $H=(V,E)$ is a \emph{$k$-uniform hypergraph} if the vertex set $V$ is a finite set and the edge set $E$ consists of $k$-element subsets of $V$, i.\,e. $E\subseteq \binom{V}{k}$. For any hypergraph $H$, we denote by $V(H)$ the vertex set of $H$ and by $E(H)$ its edge set.
We define 
\[K_n^{(k)}:=\left(\{1,2,\ldots,n\},\binom{\{1,2,\ldots,n\}}{k}\right)\]
and call any hypergraph isomorphic to $K_n^{(k)}$ a \emph{complete $k$-uniform hypergraph of order $n$}.

Let $H$ be a $k$-uniform hypergraph. A function
$\kappa:V(H)\to\{1,\ldots,c\}$
is said to be a \emph{weak $c$-coloring} if for all $e\in E(H)$ the property $\mach{\kappa(e)}>1$ holds.
The function $\kappa$ is said to be a \emph{strong $c$-coloring} if $\mach{\kappa(e)}=k$ for all $e\in E(H)$.
The \emph{weak/strong chromatic number} of $H$ is defined as the minimum $c\in\NN$ such that there exists a weak/strong coloring of $H$ with $c$ colors. The chromatic number of $H$ is denoted by $\chi\wea(H)$ and $\chi\str(H)$, respectively.
Obviously, for graphs, weak and strong colorings are equivalent.

We next define what we mean when we say that a hypergraph is embeddable into $\RR^d$. Here, $\aff$ denotes the affine hull of a set of points and $\conv$ the convex hull. 
\begin{defi}[$d$-embeddings]
\label{def:dembed}
Let $H$ be a $k$-uniform hypergraph and $d\in\NN$. A (linear) \emph{embedding} of $H$ into $\RR^d$ is a function $\phi:V(H)\to\RR^d$, where $\phi(A)$ for $A\subseteq V(H)$ is to be interpreted pointwise, such that
\begin{itemize}
	\item $\dim\aff \phi(e)=k-1$ for all $e\in E(H)${} and
	\item $\conv\phi(e_1)\cap \conv\phi(e_2)=\conv\phi\left(e_1\cap e_2\right)${} for all $e_1, e_2\in E(H)$.
\end{itemize}
\end{defi}

The first property is needed to exclude functions mapping the vertices of one edge to affinely non-independent points. The second guarantees that the embedded edges only intersect in the convex hull of their common vertices. Note that the inclusion from left to right always holds.
A $k$-uniform hypergraph $H$ is said to be \emph{$d$-embeddable} if there exists an embedding of $H$ into $\RR^d$.
Also, we denote by $\embH{d}{k}$ the set of all $d$-embeddable $k$-uniform hypergraphs.

One can easily see that our definition of 2-embeddability coincides with the classical concept of planarity \cite{Fary}. Note that in general there are several other notions of embeddability. The most popular thereof are piecewise linear embeddings and general topological embeddings. A short and comprehensive introduction is given in Section 1 in \cite{WagnerHard}. Furthermore, there exist some quite different concepts of generalizing embeddability for hypergraphs in the literature, for example \emph{hypergraph imbeddings} \cite[Chapter 13]{whiteImbed}.

We have decided to focus on linear embeddings, as they lead to a very accessible type of geometry and, at least in theory, the decision problem of whether a given $k$-uniform hypergraph is $d$-embeddable is decidable and in PSPACE \cite{Renegar}. One can show that the aforementioned three types of embeddings are equivalent only in the less than 3-dimensional case (see e.\,g.~\cite{Bokowski,Brehm}), although piecewise linear and topological embeddability coincides if $d-k\geq 2$ or $(d,k)=(3,3)$, see \cite{bryant}. Since piecewise linear and topological embeddings are more general than linear embeddings, all lower bounds for chromatic numbers can easily be transferred. Furthermore, we prove all our results on upper bounds for piecewise linear embeddings (and thus also for topological embeddings if $d-k\geq 2$ or $(d,k)=(3,3)$) except for one case (namely Theorem~\ref{thm:wup3}).

We can now give a summary of our main results in the following Tables~\ref{tab:currWeakL} and \ref{tab:currWeakU}, which contain upper or lower bounds for the maximum weak chromatic number of a $d$-embeddable $k$-uniform hypergraph on $n$ vertices. 
All results which only follow non-trivially from prior knowledge are indexed with a theorem number from which they can be derived.

\definecolor{agray}{gray}{.85}

%\begin{table}[htbp]
	%\centering
		%\begin{tabular}{lcccccc}
		%\arrayrulecolor{black}
		%\toprule
		%$d\diagdown k$ & 2 & 3 & 4 & 5 & 6 & 7 \\
		%\midrule
		%\textbf{1} & 2 & 1 & 1 & 1&1&1 \\
		%\arrayrulecolor{agray}
		%\hline
		%\textbf{2} & 4 & 4 & 1 & 1&1&1 \\
		%\hline
		%\textbf{3} & $n$ & $n_{\langle \ref{thm:stLB2}\rangle}$ & $\Omega\left(\sqrt n\right)_{\langle\ref{thm:stLB1}\rangle}$ & 1&1&1 \\
		%\hline
		%\textbf{4} & $n$ & $n_{\langle\ref{cor:stLB2}\rangle}$ & $n_{\langle\ref{cor:stLB2}\rangle}$ & $\Omega\left(\sqrt n\right)_{\langle\ref{cor:stLB1}\rangle}$ &1&1 \\
		%\hline
		%\textbf{5} & $n$ & $n$ & $n_{\langle\ref{cor:stLB2}\rangle}$ & $n_{\langle\ref{cor:stLB2}\rangle}$ &$\Omega\left(\sqrt n\right)_{\langle\ref{cor:stLB1}\rangle}$&1 \\
		%\hline
		%\textbf{6} & $n$ & $n$ & $n_{\langle\ref{cor:stLB2}\rangle}$ & $n_{\langle\ref{cor:stLB2}\rangle}$ &$n_{\langle\ref{cor:stLB2}\rangle}$&$\Omega\left(\sqrt n\right)_{\langle\ref{cor:stLB1}\rangle}$\\
		%\hline
		%\textbf{7} & $n$ & $n$ & $n$ & $n_{\langle\ref{cor:stLB2}\rangle}$ &$n_{\langle\ref{cor:stLB2}\rangle}$& $n_{\langle\ref{cor:stLB2}\rangle}$ \\
		%\arrayrulecolor{black}
		%\bottomrule
		%\end{tabular}
	%\caption{Currently known upper and lower bounds for the maximum strong chromatic number of a $d$-em\-bed\-dable $k$-uniform hypergraph on $n$ vertices as $n\to\infty$. The number in chevrons indicates the theorem number where we prove this bound.}
	%\label{tab:currStrong}
%\end{table}

\begin{table}[htbp]
	\centering
		\begin{tabular}{lcccccc}
		\arrayrulecolor{black}
		\toprule
		$d\diagdown k$  & 2 & 3 & 4 & 5 & 6 & 7\\
		\midrule
		\textbf{1} & 2 & 1 & 1 & 1 & 1 & 1 \\
		\arrayrulecolor{agray}
		\hline
		\textbf{2} & 4 & 2 & 1 & 1 & 1 & 1 \\
		\hline
		\textbf{3} & $n$ & $\Omega\left(\frac{\log n}{\log\log n}\right)_{\langle\ref{thm:wlo1}\rangle}$ & 1 & 1 & 1 & 1 \\
		\hline
		\textbf{4} & $n$ & $\Omega\left(\frac{\log n}{\log\log n}\right)_{\langle\ref{thm:wlo1}\rangle}$ & 1 & 1 & 1 & 1 \\
		\hline
		\textbf{5} & $n$ & $\lceil n/2\rceil$ & $\Omega\left(\frac{\log n}{\log\log n}\right)_{\langle\ref{thm:wlo2}\rangle}$ & 1 & 1 & 1  \\
		\hline
		\textbf{6} & $n$ & $\lceil n/2\rceil$ & $\Omega\left(\frac{\log n}{\log\log n}\right)_{\langle\ref{thm:wlo2}\rangle}$ & 1 & 1 & 1\\
		\hline
		\textbf{7} & $n$ & $\lceil n/2\rceil$ & $\lceil n/3\rceil$ & $\Omega\left(\frac{\log n}{\log\log n}\right)_{\langle\ref{thm:wlo2}\rangle}$ & 1 & 1\\
		\hline
		\textbf{8} & $n$ & $\lceil n/2\rceil$ & $\lceil n/3\rceil$ & $\Omega\left(\frac{\log n}{\log\log n}\right)_{\langle\ref{thm:wlo2}\rangle}$ & 1 & 1 \\
		\arrayrulecolor{black}
		\bottomrule		
		\end{tabular}
	\caption{Currently known lower bounds for the maximum weak chromatic number of a $d$-embeddable $k$-uniform hypergraph on $n$ vertices as $n\to\infty$. The number in chevrons indicates the theorem number where we prove this bound.}
	\label{tab:currWeakL}
\end{table}

\begin{table}[htbp]
	\centering
		\begin{tabular}{lcccccc}
		\arrayrulecolor{black}
		\toprule
		$d\diagdown k$  & 2 & 3 & 4 & 5 & 6 & 7\\
		\midrule
		\textbf{1} & 2 & 1 & 1 & 1 & 1 & 1 \\
		\arrayrulecolor{agray}
		\hline
		\textbf{2} & 4 & 2 & 1 & 1 & 1 & 1 \\
		\hline
		\textbf{3} & $n$ & $\Ol(n^{1/2})_{\langle\ref{thm:wup1}\rangle}$ & $\Ol(n^{1/2})_{\langle\ref{thm:wup1}\rangle}$ & 1 & 1 & 1 \\
		\hline
		\textbf{4} & $n$ & $\lceil n/2\rceil$ & $\Ol(n^{2/3})_{\langle\ref{thm:wup1}\rangle}$ & $\Ol(n^{1/2})_{\langle\ref{thm:wup3}\rangle}$ & 1 & 1 \\
		\hline
		\textbf{5} & $n$ & $\lceil n/2\rceil$ & $\Ol(n^{26/27})_{\langle\ref{thm:wup2}\rangle}$ & $\Ol(n^{3/4})_{\langle\ref{thm:wup1}\rangle}$ & $\Ol(n^{3/5})_{\langle\ref{thm:wup3}\rangle}$ & 1  \\
		\hline
		\textbf{6} & $n$ & $\lceil n/2\rceil$ & $\lceil n/3\rceil$ & $\Ol(n^{35/36})_{\langle\ref{thm:wup2}\rangle}$ & $\Ol(n^{4/5})_{\langle\ref{thm:wup1}\rangle}$ & $\Ol(n^{1/2})_{\langle\ref{thm:wup3}\rangle}$\\
		\hline
		\textbf{7} & $n$ & $\lceil n/2\rceil$ & $\lceil n/3\rceil$ & $\Ol(n^{107/108})_{\langle\ref{thm:wup2}\rangle}$ & $\Ol(n^{44/45})_{\langle\ref{thm:wup2}\rangle}$ & $\Ol(n^{5/6})_{\langle\ref{thm:wup1}\rangle}$\\
		\hline
		\textbf{8} & $n$ & $\lceil n/2\rceil$ & $\lceil n/3\rceil$ & $\lceil n/4\rceil$ & $\Ol(n^{134/135})_{\langle\ref{thm:wup2}\rangle}$ & $\Ol(n^{53/54})_{\langle\ref{thm:wup2}\rangle}$ \\
		\arrayrulecolor{black}
		\bottomrule		
		\end{tabular}
	\caption{Currently known upper bounds for the maximum weak chromatic number of a $d$-embeddable $k$-uniform hypergraph on $n$ vertices as $n\to\infty$. The number in chevrons indicates the theorem number where we prove this bound.}
	\label{tab:currWeakU}
\end{table}

Considering the strong chromatic number, the question whether embeddability restricts the number of colors needed can be answered negatively by the following observation.

Let $n,d\in\NN$ such that $d\geq 3$ and $n\geq d+1$ and let $V=\{1,\ldots,n\}$. Let $\phi:\RR\to\RR^d$, $\phi(x)=(x,\ldots,x^{d+1})$ be the $(d+1)$-dimensional moment curve. Then $\phi(V)$ are the vertices of a cyclic polytope $P=\conv\phi(V)$ (see \cite{cara1,cara2,motzkin}). As $d\geq 3$, we have that $P$ is 2-neighborly \cite{gale}. Define $H(P)=(V,E(P))$ to be the $(d+1)$-uniform hypergraph with $E(P)=\{e\subseteq V:e\text{ is the set of vertices of a facet of }P\}$. Then $H(P)$ can be linearly embedded into $\RR^d$: for example, one can take the Schegel-Diagram \cite{schlegel} of $P$ with respect to some facet.

Now, choose $k\in\NN$ such that $2\leq k\leq d+1$. Following \cite[§7.1]{handbook}, for any hypergraph $H=(W,E)$, we call 
\[\shadow_k(H)=\left(W,\left\{\{w_1,\ldots,w_k\}:\{w_1,\ldots,w_k\}\subseteq e\text{ for some }e\in E\right\}\right)\]
the \emph{$k$-shadow} of $H$. As $P$ is 2-neighborly we have that $\shadow_2(H(P))=K_n$ and thus $\chi\str(H(P))=n$.
Obviously, $\shadow_2(\shadow_k(H(P)))=K_n$ and $\chi\str(\shadow_k(H(P)))=n$, too. Thus, we have demonstrated that for any $2\leq k\leq d+1\leq n$ there exists a $k$-uniform hypergraph on $n$ vertices that is linearly $d$-embeddable and has strong chromatic number $n$.

Thus, from now on, we restrict ourselves to the weak case and will always mean this when talking about a chromatic number.
To conclude the introduction, here is a rough outline for the rest of the paper.
In Section \ref{sec:embed} the general concept of embedding hypergraphs into $d$-dimensional space is discussed. We also show the embeddability of certain structures needed later on, hereby extensively using known properties of the moment curve $t\mapsto(t,t^2,t^3,\ldots,t^d)$. Then, Section~\ref{sec:weak} presents our current level of knowledge for the more difficult problem of weakly coloring hypergraphs.
\section{Embeddability}
\label{sec:embed}

The first part of this section gives insight into the structure of neighborhoods of single vertices in a hypergraph $H\in\embH{d}{k}$. We will later use this information to prove upper bounds on the number of edges in our hypergraphs. This will then yield upper bounds on the weak chromatic number.
However, we must first take a small technical detour into piecewise linear embeddings. As our hypergraphs are finite and of fixed uniformity we give a slightly simplified definition (for a more comprehensive introduction, see e.\,g. \cite{rourkeSand}).

\begin{defi}[Piecewise linear $d$-embeddings]
\label{def:dPembed}
Let $H$ be a $k$-uniform hypergraph and $D,d\in\NN$.  Let $\phi:V(H)\to\RR^{D}$ be a linear embedding of $H$ and define $\phi(H)=\bigcup_{e\in E(H)} \conv\phi(e)$.

We say $H$ is \emph{piecewise linearly embeddable} if there exists $\psi:\phi(H)\to\RR^d$ such that $\psi$ is a homeomorphism from $\phi(H)$ onto its image and there exists a (locally finite) subdivision $K$ of $\phi(H)$ (seen as a geometric simplicial complex) such that $\psi$ is affine on all elements of $K$. We call $\psi$ a \emph{piecewise linear embedding} of $H$ into $\RR^d$ and we denote by $\embHP{d}{k}$ the set of all piecewise linearly $d$-embeddable $k$-uniform hypergraphs.
\end{defi}

Note that such a $\phi$ always exists, as $H\in\embH{2k-1}{k}$ by the Menger-Nöbeling Theorem (see \cite[p. 295]{Menger} and \cite{Nobeli}). Also, Definition~\ref{def:dPembed} is independent of the choice of $\phi$.

\begin{defi}[Neighborhoods]
For a $k$-uniform hypergraph $H$ and a vertex $v\in V(H)$ we say the \emph{neighborhood} of $v$ is $N_H(v)=\{w\in V(H):w\neq v\text{ and there is an edge in $E(H)$ incident with $w$ and $v$}\}$. We define the \emph{neighborhood hypergraph} (or link) of $v\in V(H)$ to be the induced $(k-1)$-uniform hypergraph
\[\NH_H(v)=\left(N_H(v),\{e\witho \{v\}:e\in E(H),v\in e\}\right).\]
The \emph{degree} $\deg_H(v)=\deg(v)$ is the number of edges in $E(H)$ incident with $v$.
\end{defi}

\begin{lem}
\label{lem:neighbH}
For a hypergraph $H\in\embHP{d}{k}$ on $n$ vertices, $d\geq k\geq2$, and for any vertex $v$ we have that $\NH_H(v)\in\embHP{d-1}{k-1}$.
\end{lem}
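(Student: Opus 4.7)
The plan is to identify \(\NH_H(v)\) with the link of \(\psi(\phi(v))\) in the PL-embedded complex \(\psi(\phi(H))\), realize this link on a small PL \((d-1)\)-sphere around \(\psi(\phi(v))\) inside \(\RR^d\), and then puncture the sphere at a suitable point to obtain an embedding into \(\RR^{d-1}\).

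Let \(\phi : V(H)\to\RR^D\) (with \(D\geq 2k-1\)) be the linear embedding from Definition~\ref{def:dPembed} and \(\psi : \phi(H)\to\RR^d\) the PL homeomorphism onto its image, affine on simplices of a subdivision \(K\) of \(\phi(H)\). By refining \(K\) if necessary, I may assume (a) \(\phi(v)\) is a vertex of \(K\) and (b) each \((k-1)\)-simplex \(\conv\phi(e)\) for \(e\ni v\) is itself a simplex of \(K\), so that \(\psi\) is globally affine on it. Note that the restriction \(\phi|_{N_H(v)}\) is already a linear embedding of \(\NH_H(v)\) into \(\RR^D\), so my task reduces to producing a PL homeomorphism from \(\phi|_{N_H(v)}(\NH_H(v))\) into \(\RR^{d-1}\). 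For a small \(\epsilon>0\), the \emph{slice}
\[
\Sigma \;=\; \bigcup_{v\in e\in E(H)} \conv\bigl\{(1-\epsilon)\phi(v)+\epsilon\phi(u) : u\in e\setminus\{v\}\bigr\}
\]
is a PL subcomplex of \(\phi(H)\), affinely (hence PL) equivalent to \(\phi|_{N_H(v)}(\NH_H(v))\) via the contraction \(x\mapsto (1-\epsilon)\phi(v)+\epsilon x\).

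The core geometric step is to transport the slice onto a small PL sphere around \(\psi(\phi(v))\) in \(\RR^d\). I would choose a PL \(d\)-ball \(B\subset\RR^d\) around \(\psi(\phi(v))\) — for instance the closed star of \(\psi(\phi(v))\) in a sufficiently fine refinement of \(\psi(K)\) extended to all of \(\RR^d\) — such that \(B\) meets \(\psi(\phi(H))\) only in the simplices incident to \(\psi(\phi(v))\) and such that \(\psi(\Sigma)\) lies in the interior of \(B\). Because each \(\conv\phi(e)\) with \(e\ni v\) is a simplex of \(K\) mapped affinely by \(\psi\), the set \(\psi(\conv\phi(e))\cap B\) is a straight cone from \(\psi(\phi(v))\) over \(\psi(\conv\phi(e))\cap\partial B\), and the radial retraction sending each slice piece \(\psi(\conv\{(1-\epsilon)\phi(v)+\epsilon\phi(u_i)\}_i)\) affinely onto the corresponding face of \(\partial B\cap\psi(\phi(H))\) is PL. The resulting composition gives a PL embedding \(\rho\) of \(\NH_H(v)\) onto a subcomplex of the PL \((d-1)\)-sphere \(\partial B\).

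Finally, since \(\NH_H(v)\) is \((k-1)\)-uniform, the image of \(\rho\) has topological dimension at most \(k-2\leq d-2<d-1\), so some point \(p\in\partial B\) is missed. The standard PL fact that a PL \((d-1)\)-sphere minus a point is PL homeomorphic to \(\RR^{d-1}\) supplies a PL homeomorphism \(\pi : \partial B\setminus\{p\}\to\RR^{d-1}\), and the composition \(\pi\circ\rho\) is the desired PL embedding of \(\NH_H(v)\) into \(\RR^{d-1}\), proving \(\NH_H(v)\in\embHP{d-1}{k-1}\). The hypothesis \(d\geq k\) enters exactly at the existence of the puncture point \(p\). The principal obstacle is the PL bookkeeping in the core step: arranging the refined subdivision and the ball \(B\) so that the "cone over the link" picture is literally realized by \(\psi\) and the radial retraction is PL. This is a standard regular-neighborhood construction, but requires care; everything else — the slice identification, the puncture, and the projection — is essentially routine.
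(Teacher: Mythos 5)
Your overall strategy is essentially the paper's: isolate the star of \(\psi(\phi(v))\), push a copy of the link onto the boundary of a small \(d\)-dimensional neighborhood of \(\psi(\phi(v))\) by radial (cone) projection, and then get from that \((d-1)\)-sphere-like boundary into \(\RR^{d-1}\), with \(d\geq k\) entering only through a dimension count. In the paper the boundary is the stretched simplex boundary \(C\), the radial retraction is \(\rho\), and instead of puncturing a PL sphere it chooses a line \(S\) through the origin missing \(\psi(\phi'(H_v))\) (the null-set-of-lines argument, which is where \(d\geq k\) is used) and applies the coordinate projection \(\pi_{d-1}\) on \(C'=C\witho(\{-2\Delta\}\times T)\); your puncture point \(p\) and PL stereographic step play exactly that role, and your dimension count \(k-2\leq d-2\) is an equivalent use of \(d\geq k\).

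There is, however, one genuine flaw: your reduction (b), "by refining \(K\) I may assume each \(\conv\phi(e)\) with \(e\ni v\) is itself a simplex of \(K\)," is not achievable. A subdivision only cuts simplices into smaller pieces, so no refinement turns \(\conv\phi(e)\) back into a single simplex on which \(\psi\) is affine; assuming \(\psi\) affine on every embedded edge at \(v\) amounts to assuming the star of \(v\) is \emph{linearly} embedded in \(\RR^d\), a strictly stronger hypothesis that would beg the question. Your "straight cone" description of \(\psi(\conv\phi(e))\cap B\) rests on this, so the core step fails as written. The repair is exactly the paper's \(\epsilon\)-shrinking: \(\phi(v)\) is automatically a vertex of \(K\), \(\psi\) is affine on the simplices of \(K\) that contain \(\phi(v)\), and their union (the closed star) is a neighborhood of \(\phi(v)\) in \(\phi(H)\); choose \(\epsilon\) so small that your slice \(\Sigma\) (the paper's \(\epsilon\cdot\phi(H_v)\)) lies inside this star. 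Then near \(\psi(\phi(v))\) the image really is a finite union of straight cones with apex \(\psi(\phi(v))\), one per simplex of \(K\) at \(\phi(v)\), and your radial retraction onto \(\partial B\) goes through after a further subdivision making the intersection with \(\partial B\) simplicial (the paper's \(K^3_v\)). With that correction the remaining steps — existence of the missed point \(p\), and mapping the punctured PL \((d-1)\)-sphere into \(\RR^{d-1}\) — are sound.
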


\begin{proof}
Let $d\geq k\geq2$, $H\in\embHP{d}{k}$, $v\in V(H)$, and $V_v=N_H(v)$ nonempty. Then there exist $\phi:V(H)\to\RR^{2k-1}$ a linear embedding and $\psi:\phi(H)\to\RR^{d}$ a piecewise linear embedding of $H$ for some subdivision $K$ of $\phi(H)$ on whose elements $\psi$ is affine. Without restriction assume that $\phi(v)=\mathbf 0_{2k-1}$ and $\psi(\mathbf 0_{2k-1})=\mathbf 0_{d}$.

Let $H_v=(V_v\cup\{v\},\{e\in E(H):v\in e\})$ be the sub-hypergraph of $H$ of all edges containing $v$. Obviously, $\psi|\phi(H_v)$ (the restriction of $\psi$ onto $\phi(H_v)$) is a piecewise linear embedding of $H_v$ for some subdivision $K_v\subseteq K$.
Let $K^1_v=\{e\in K_v:\mathbf 0_{2k-1}\in e\}$. Then there exists an $\epsilon>0$ such that \[\epsilon\cdot\phi(H_v)\subseteq \bigcup_{e\in K^1_v} e,\] i.\,e. all points in $\epsilon\cdot\phi(H_v)$ are so close to $\mathbf 0_{2k-1}$ that they lie completely in elements of $K_v$ that contain the origin.

Then $\phi':V_v\cup\{v\}\to\RR^{2k-1}, w\mapsto \epsilon\cdot\phi(w)$ is a linear and thus $\psi|\phi'(H_v)$ a piecewise linear embedding of $H_v$ for the subdivision $K^2_v=\{e\cap\phi'(H_v):e\in K^1_v\}$.
%We now claim that there exists a $1$-dimensional subspace $S$ of $\RR^d$ such that $\psi(\phi'(H_v))\cap S=\{\mathbf 0_d\}$. This is true as each simplex $\psi(e)$, $e\in K^2_v$, has dimension less or equal $k-1$ and hence \enquote{forbids} only a null set of possible lines through the origin when $d\geq k$.
%Without loss of generality, let $S=\RR\times\{\mathbf 0_{d-1}\}$.
Let $V_{K^2_v}\supseteq\phi'(V_v)$ be the set of all subdivision points of $K^2_v$ \emph{without} $\mathbf 0_{2k-1}$ and let
%Let $\pi_{d-1}:\RR^d\to\RR^{d-1}$ and $\pi_1:\RR^d\to\RR$ be the natural projections onto the last $d-1$ coordinates and the first coordinate respectively. Further, put $\Delta=\max\{|\pi_1(\psi(w))|:w\in V_{K^2_v}\}$ and
\[\delta=\min\left\{\left\|\psi(x)\right\|:x\in\conv(e\cap V_{K^2_v})\text{ for some }e\in K^2_v\right\}.\]

%We know that $\delta>0$ by our choice of $S$. We take a regular $(d-1)$-simplex $T\subseteq\RR^{d-1}$ centered at the origin with sides of length $\delta$ and set $C=\partial\conv((\{-2\Delta\}\times T)\cup \{(2\Delta,\mathbf 0_{d-1})\})$. Thus, $C$ is the boundary of a stretched $d$-simplex. Due to our choice of $\delta$ and $\Delta$, all $\psi(w)$ for $w\in V_{K^2_v}$ lie outside of $C$ and for all $e\in K^2_v$ the intersection $\psi(e)\cap C$ completely lies in $C':=C\witho (\{-2\Delta\}\times T)$, is connected, and the union of finitely many at most $(k-2)$-dimensional simplices.

Obviously, we have that $\delta>0$. We take a regular $d$-simplex $T\subseteq\RR^d$ centered at the origin with sides of length $\delta$ and set $C=\partial T$. Due to our choice of $\delta$, all $\psi(w)$ for $w\in V_{K^2_v}$ lie outside of $T$. 
Further, for all $e\in K^2_v$ the intersection $\psi(e)\cap C$ is the union of finitely many at most $(k-2)$-dimensional simplices and homeomorphic to a $(k-2)$-dimensional simplex.
Also, as $d\geq k$, there exists a point $x\in C$ such that $x\notin\psi(e)$ for all $e\in K^2_v$.

Thus, there exists a subdivision $K^3_v$ of $K^2_v$ such that for all $e\in K^3_v$ with dimension $k-1$ we have that $\psi(e)\cap C$ is a $(k-2)$-dimensional simplex and still $\mathbf 0_{2k-1}\in e$ . We denote the set of subdivision points \emph{without} $\mathbf 0_{2k-1}$ by $V_{K^3_v}\supseteq V_{K^2_v}$.
Now, one can find a retraction $\rho:\psi(\phi'(H_v))\to\psi(\phi'(H_v))$ that maps each $\psi(w)$, $w\in V_{K^3_v}$, to the intersection point of the line segment $[\mathbf 0_d,\psi(w)]$ with $C$, such that $\rho$ is linear on all $\psi(e)$ for $e\in K^3_v$.

Set $\hat{K}=\{\conv(e\cap V_{K^3_v}):e\in K^3_v\}$ which is now a subdivision of $\phi'(\NH_H(v))\subseteq\phi'(H_v)$. Then the image of $\rho\circ(\psi|\hat{K})$ lies completely in $C\backslash\{x\}$. 

Finally, note that $C\backslash\{x\}$ is piecewise linearly homeomorphic to $\RR^{d-1}$ \cite[3.20]{rourkeSand}. Let $\gamma$ be such a (piecewise linear) homeomorphism . Then,
\[\hat{\psi}=\gamma\circ\rho\circ(\psi|\phi'(\NH_H(v)))\]
is a piecewise linear embedding of $\NH_H(v)$ into $\RR^{d-1}$ for some subdivision of $\hat{K}$ and $\NH_H(v)\in\embHP{d-1}{k-1}$.
\end{proof}

Note that it is quite plausible that a version of Lemma~\ref{lem:neighbH} for linear or general embeddings does not hold. Part (a) of the following result has previously been established by Dey and Pach for linear embeddings \cite[Theorem~3.1]{deyPach}.

\begin{lem}%[Size of the neighborhood hypergraph]
\label{lem:sizeH}
a) For a hypergraph $H\in\embHP{k}{k}$ on $n$ vertices, $k\geq2$, we have that $\mach{E(H)}\leq \frac{6n^{k-1}-12n^{k-2}}{k!}$.

b) For a hypergraph $H\in\embHP{k+1}{k+1}$ on $n$ vertices, $k\geq2$, and for any vertex $v$ we have that $\deg_H(v)\leq \frac{6n^{k-1}-12n^{k-2}}{k!}$.
\end{lem}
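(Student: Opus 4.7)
The natural approach is induction on \(k\), proving part (a) directly and obtaining part (b) as an immediate corollary of (a) combined with Lemma~\ref{lem:neighbH}.

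For the base case \(k=2\) of (a), the class \(\embHP{2}{2}\) coincides with the class of simple planar graphs (the three notions of embeddability agree in dimension less than \(3\), as remarked in the introduction), so Euler's formula gives \(\mach{E(H)}\leq 3n-6=(6n-12)/2!\) for the relevant range of \(n\); the edge cases \(n\le 2\) are trivial. For the inductive step, assume (a) holds for uniformity \(k-1\), take \(H\in\embHP{k}{k}\) on \(n\) vertices, and fix \(v\in V(H)\). By Lemma~\ref{lem:neighbH}, \(\NH_H(v)\in\embHP{k-1}{k-1}\), and it has at most \(n\) vertices; the inductive hypothesis (combined with the monotonicity of the right-hand side in the number of vertices) then yields
\[
\deg_H(v)\;=\;\mach{E(\NH_H(v))}\;\leq\;\frac{6n^{k-2}-12n^{k-3}}{(k-1)!}.
\]
Summing over all \(v\in V(H)\) and applying the handshake identity \(\sum_{v}\deg_H(v)=k\cdot\mach{E(H)}\) gives exactly the bound claimed in part (a).

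For (b), given \(H\in\embHP{k+1}{k+1}\) on \(n\) vertices and any \(v\in V(H)\), Lemma~\ref{lem:neighbH} places the link \(\NH_H(v)\) in \(\embHP{k}{k}\) on at most \(n\) vertices, and applying part (a) to this link gives \(\deg_H(v)=\mach{E(\NH_H(v))}\leq (6n^{k-1}-12n^{k-2})/k!\) as claimed. The only non-routine ingredient of the whole argument is Lemma~\ref{lem:neighbH}, which is already established; the one technical point that needs to be checked directly is that the right-hand side of (a) is increasing in \(n\) for \(n\) in the regime of interest, so that replacing \(\mach{V(\NH_H(v))}\) by \(n\) in the inductive step is legitimate.
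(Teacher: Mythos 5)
Your proof is correct and is essentially the paper's argument: the paper runs an interleaved induction ((a) for $k$ gives (b) for $k$ via Lemma~\ref{lem:neighbH}, and (b) for $k$ gives (a) for $k+1$ via the degree-sum identity), while you simply repackage the same two ingredients as a direct induction on (a) with (b) as a corollary. Your explicit remark about monotonicity of the bound in $n$ (since the link has fewer than $n$ vertices) is a point the paper glosses over, but it is the same proof in substance.
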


\begin{proof}

If $k=2$, then (a) is equivalent to the fact that for $G$ planar $|E(G)|\leq 3n-6$. Given that (a) is true for some $k\geq2$, we show that~(b) holds for $k$ as well. Let $H\in\embHP{k+1}{k+1}$, $v$ one of the $n$ vertices. By Lemma~\ref{lem:neighbH}, $\NH_H(v)\in\embHP{k}{k}$. By~(a), $\mach{E(\NH_H(v))}\leq \frac{6n^{k-1}-12n^{k-2}}{k!}$ which implies $\deg_H(v)\leq \frac{6n^{k-1}-12n^{k-2}}{k!}$.

Given that (b) is true for some $k\geq2$, we show that (a) holds for $k+1$. Let $H\in\embHP{k+1}{k+1}$. Since (b) is true for every vertex $v_i$, we have \[\mach{E(H)}=\frac{\sum_{i=1}^{n}\deg_H(v_i)}{k+1}\leq\frac{n(6n^{k-1}-12n^{k-2})}{(k+1)k!}=\frac{6n^{k}-12n^{k-1}}{(k+1)!}.\]
\end{proof}

\begin{cor}%[Number of disjoint edges]
\label{cor:sizeH}
For a hypergraph $H\in\embHP{k}{k}$ on $n$ vertices, $k\geq3$, and for any edge $e\in E(H)$ there exist at most $k\frac{6n^{k-2}-12n^{k-3}}{(k-1)!}-k$ other edges adjacent to it.
\end{cor}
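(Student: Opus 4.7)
The plan is to derive Corollary \ref{cor:sizeH} as a direct consequence of Lemma \ref{lem:sizeH}(b), simply by shifting the index down by one and then applying a union bound over the vertices of the chosen edge.

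First, I would reindex Lemma \ref{lem:sizeH}(b). That lemma, as stated, handles hypergraphs in $\embHP{k+1}{k+1}$; replacing $k$ by $k-1$ throughout turns it into the statement: for any $H\in\embHP{k}{k}$ on $n$ vertices with $k-1\geq 2$ (i.e.\ $k\geq 3$, matching the hypothesis of the corollary) and every vertex $v$, one has
\[
\deg_H(v) \leq \frac{6n^{k-2}-12n^{k-3}}{(k-1)!}.
\]

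Next, I would fix an edge $e=\{v_1,\dots,v_k\}\in E(H)$ and bound the number of other edges meeting $e$. Every edge $f\in E(H)$ with $f\cap e\neq\emptyset$ contains at least one $v_i$, so the union bound gives
\[
\bigl|\{f\in E(H):f\cap e\neq\emptyset\}\bigr| \leq \sum_{i=1}^{k}\deg_H(v_i) \leq k\cdot\frac{6n^{k-2}-12n^{k-3}}{(k-1)!}.
\]
Since $e$ itself belongs to the set on the left-hand side, subtracting $1$ yields the desired bound on the number of \emph{other} edges adjacent to $e$:
\[
\bigl|\{f\in E(H):f\neq e,\ f\cap e\neq\emptyset\}\bigr| \leq \frac{k(6n^{k-2}-12n^{k-3})}{(k-1)!}-1.
\]

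There is essentially no obstacle here; the argument is bookkeeping. The only point that warrants a sentence is explicitly noting why the hypothesis $k\geq 3$ is precisely what is needed for the reindexed application of Lemma \ref{lem:sizeH}(b) (the lemma requires $k\geq 2$ in its statement, and our shift by one turns this into $k\geq 3$). No further geometric input is required, since all of the embeddability work has already been absorbed into Lemma \ref{lem:sizeH}.
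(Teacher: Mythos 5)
Your proposal is correct and follows essentially the same route as the paper: apply Lemma~\ref{lem:sizeH}(b) with the index shifted down by one to bound each vertex degree by \((6n^{k-2}-12n^{k-3})/(k-1)!\), sum over the \(k\) vertices of \(e\), and subtract \(1\) because \(e\) itself is counted. Your explicit remark about why \(k\geq3\) is exactly the reindexed hypothesis is a nice touch but does not change the argument.
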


\begin{proof}
This follows from Lemma~\ref{lem:sizeH}, since every edge has exactly $k$ vertices and each of them has degree at most $\frac{6n^{k-2}-12n^{k-3}}{(k-1)!}$. As $e$ itself counts for the degree as well, one can subtract $k$.
\end{proof}

We need to bound the number of edges in a $d$-embeddable hypergraph to prove upper bounds for the chromatic number. The following results will also help to do this. Note that there exist much stronger conjectured bounds (see \cite[Conjecture~1.4.4]{gundert} and \cite[Conjecture~27]{kalai}). 

\begin{prop}[Gundert~{\cite[Proposition~3.3.5]{gundert}}]
\label{prop:gundert}
Let $k\geq2$. For a $k$-uniform hypergraph on $n$ vertices that is \emph{topologically} embedabble into $\RR^{2k-2}$, we have that $\mach{E(H)}< n^{k-3^{1-k}}$.
\end{prop}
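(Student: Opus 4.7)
The plan is to combine a topological forbidden-subhypergraph statement with a classical hypergraph Zarankiewicz-type extremal bound. Observe that the target exponent rewrites as $k - 3^{1-k} = k - 1/3^{k-1}$, which is precisely the Erd\H{o}s (1964) extremal exponent for $K^{(k)}_{t,\ldots,t}$-free $k$-uniform hypergraphs specialised to $t=3$. So it suffices to prove that any $H$ topologically embeddable in $\RR^{2k-2}$ cannot contain the complete $k$-partite $k$-uniform hypergraph $K^{(k)}_{3,3,\ldots,3}$ (three vertices in each of $k$ parts), and then quote the extremal bound.

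The extremal step is routine. Erd\H{o}s's iterated-double-counting argument yields that any $K^{(k)}_{t,\ldots,t}$-free $n$-vertex $k$-uniform hypergraph has at most $C_{k,t}\, n^{k - 1/t^{k-1}}$ edges. For $t=3$ this is $C_k\, n^{k-3^{1-k}}$, which is strictly below $n^{k-3^{1-k}}$ once $n$ is large enough to absorb the multiplicative constant $C_k$; the finitely many small-$n$ cases can be swept under the rug by adjusting constants or by noting that $\binom{n}{k}$ already beats the claim in that regime.

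The topological step is where the real work lives. As a simplicial complex, $K^{(k)}_{3,\ldots,3}$ is the $k$-fold join $[3]*[3]*\cdots*[3]$ of three discrete points, a pure $(k-1)$-dimensional complex on $3k$ vertices. One would try to establish its non-embeddability in $\RR^{2k-2}$ by a generalized Van Kampen--Flores argument: construct the appropriate deleted join of $K^{(k)}_{3,\ldots,3}$ carrying a free action of $(\mathbb{Z}_3)^k$ (or of the larger wreath group $S_3 \wr S_k$), argue that a topological embedding $K^{(k)}_{3,\ldots,3} \hookrightarrow \RR^{2k-2}$ induces an equivariant map from that deleted join into a model of the configuration space of $k$ ordered points in $\RR^{2k-2}$, and rule out such an equivariant map by an equivariant connectivity or index obstruction in the style of Sarkaria and Volovikov.

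The main obstacle is entirely this equivariant obstruction computation: one has to pick the correct deleted join, verify that it is sufficiently highly equivariantly connected, and then show that no equivariant map to the diagonal-complement model exists in the prescribed dimension. Matching the connectivity bound with the target dimension $2k-2$ for the group $(\mathbb{Z}_3)^k$ is the delicate calculation; once it is done, the combinatorial step is essentially automatic and the numerical inequality $< n^{k-3^{1-k}}$ follows immediately.
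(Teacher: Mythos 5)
First, note that the paper itself gives no proof here: the proposition is quoted from Gundert's thesis, so your proposal has to stand on its own. Your high-level plan is indeed the standard one (and the one behind Gundert's argument): show that \(K^{(k)}_{3,\ldots,3}\), i.e.\ the \(k\)-fold join \([3]^{*k}\) of three isolated points, admits no topological embedding into \(\RR^{2k-2}\), then invoke Erd\H{o}s's extremal bound for hypergraphs with no complete \(k\)-partite subhypergraph with parts of size \(3\). But the proof has a genuine gap exactly where you place "the main obstacle": the non-embeddability of \([3]^{*k}\) is neither proved nor cited, only sketched as a hoped-for equivariant obstruction computation for the group \((\mathrm{Z}_3)^k\) or \(S_3\wr S_k\) mapping into a configuration space of \(k\) ordered points. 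That machinery is misdirected: groups of order \(3\) and \(k\)-point configuration spaces belong to Tverberg-type (\(r\)-fold intersection) problems, whereas embeddability is a statement about \emph{pairwise} intersections, for which a two-element-group argument suffices and is classical. Concretely: deleted joins distribute over joins, and the deleted join of three points is a hexagon, so \(\left([3]^{*k}\right)^{*2}_{\Delta}\cong\left([3]^{*2}_{\Delta}\right)^{*k}\cong (S^1)^{*k}\cong S^{2k-1}\) with a free involution, hence its index is \(2k-1\); on the other hand an embedding of a complex \(K\) into \(\RR^{d}\) yields, via the usual map \(tx\oplus(1-t)y\mapsto\bigl(t-(1-t),\,tf(x)-(1-t)f(y)\bigr)\), an equivariant map \(K^{*2}_{\Delta}\to S^{d}\) and so forces the index to be at most \(d=2k-2\), a contradiction. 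Without this (or a citation to the Flores/van Kampen family of examples), your argument is incomplete at its core, and the route you sketch is not obviously repairable as stated.

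The quantitative step is also handled incorrectly. If the extremal bound you quote has the form \(C_k\,n^{k-3^{1-k}}\) with \(C_k>1\), it is \emph{never} "strictly below \(n^{k-3^{1-k}}\) once \(n\) is large": the exponents are identical, so a multiplicative constant cannot be absorbed by letting \(n\) grow. What you need is the constant-free form of Erd\H{o}s's theorem (more than \(n^{k-t^{1-k}}\) edges force \(K^{(k)}_{t,\ldots,t}\), valid for \(n\ge n_0(k,t)\)), plus a separate treatment of small \(n\). Your appeal to \(\binom{n}{k}\) does work there, but in the opposite direction from how you phrase it: \(\binom{n}{k}\le n^{k}/k!<n^{k-3^{1-k}}\) precisely when \(n^{3^{1-k}}<k!\), i.e.\ for \(n<(k!)^{3^{k-1}}\), so the trivial bound covers an initial range of \(n\), and one must still check that the threshold \(n_0(k,3)\) falls inside that range (or redo the double counting with explicit constants) to obtain the clean inequality for every \(n\) as the proposition states. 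So the outline matches the known proof, but both the topological lemma and the constant bookkeeping still have to be actually carried out.
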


\begin{cor}%[Number of disjoint edges]
\label{cor:gundert}
For a hypergraph $H\in\embHP{2k-\ell}{k}$ on $n$ vertices, $k\geq \ell\geq2$, we have that $\mach{E(H)}< \frac{(k-\ell+2)!}{k!}\cdot n^{k-3^{\ell-1-k}}$.
\end{cor}

\begin{proof}
This follows from inductively applying Lemma~\ref{lem:neighbH} and Proposition~\ref{prop:gundert}.
\end{proof}

\begin{cor}%[Number of disjoint edges]
\label{cor:sizeGundert}
For a hypergraph $H\in\embHP{2k-\ell}{k}$ on $n$ vertices, $k\geq \ell\geq3$, and for any edge $e\in E(H)$ there exist at most $\frac{(k-\ell+2)!}{(k-1)!}\cdot n^{k-1-3^{\ell-1-k}}-k$ other edges adjacent to it.
\end{cor}

\begin{proof}
This fact follows analogously to Corollary~\ref{cor:sizeH} from Corollary~\ref{cor:gundert}.
\end{proof}

\begin{thm}[Dey and Pach~\protect{\cite[Theorem 2.1]{deyPach}}]
\label{thm:deypach}
Let $k\geq2$. For a $k$-uniform hypergraph on $n$ vertices that is \emph{linearly} embedabble into $\RR^{k-1}$, we have that $\mach{E(H)}< kn^{\lceil (k-1)/2\rceil}$.
\end{thm}

\begin{cor}
\label{cor:deypach}
For a hypergraph $H\in\embH{k-1}{k}$ on $n$ vertices, $k\geq2$, and for any edge $e\in E(H)$ there exist at most $kn^{\lceil (k-1)/2\rceil}-1$ other edges adjacent to it.
\end{cor}

\begin{proof}
This fact follows obviously from Theorem~\ref{thm:deypach}.
\end{proof}

In order to find lower bounds for the chromatic number of hypergraphs later on, we need to be able to prove embeddability. The following theorem from Shephard will turn out to be very useful when embedding vertices of a hypergraph on the moment curve.

\begin{thm}[Shephard~\cite{shephard}]
\label{thm:shephard}
Let $W=\{w_1,\ldots,w_m\}\subseteq\RR^d$ be distinct points on the moment curve in that order and $P=\conv W$. We call a $q$-element subset $\{w_{i_1},w_{i_2},\ldots,w_{i_q}\}\subseteq W$ with $i_1<i_2<\cdots<i_q$ \emph{contiguous} if $i_q-i_1 = q-1$. Then $U\subseteq W$ is the set of vertices of a $(k-1)$-face of $P$ if and only if $\mach U=k$ and for some $t\geq 0$
\[U=Y_S\cup X_1\cup\cdots\cup X_t\cup Y_E,\]
where all $X_i$, $Y_S$, and $Y_E$ are \emph{contiguous} sets, $Y_S=\emptyset$ or $w_1\in Y_S$, $Y_E=\emptyset$ or $w_m\in Y_E$, and at most $d-k$ sets $X_i$ have odd cardinality. 
\end{thm}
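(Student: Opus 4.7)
The plan is to convert the face condition into a question about polynomial interpolation on the line and then read off the combinatorial decomposition from the resulting sign pattern. Let $t_1 < t_2 < \cdots < t_m$ parameterise the points of $W$ so that $w_i = (t_i, t_i^2, \ldots, t_i^d)$. First I would note that $U \subseteq W$ of size $k$ is the vertex set of a $(k-1)$-face of $P$ iff there is a supporting hyperplane $\{x \in \RR^d : \langle a, x \rangle = c\}$ of $P$ with $H \cap P = \conv U$; composing $\langle a, \cdot \rangle - c$ with the moment curve turns this into the existence of a polynomial $p(t)$ of degree at most $d$ with $p(t_i) = 0$ for $w_i \in U$ and, after possibly replacing $p$ by $-p$, $p(t_i) > 0$ for $w_i \in W \setminus U$. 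From this point on the geometry is gone and the whole proof is about such polynomials.

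For necessity, I would fix such a $p$ and look at the (unique) decomposition $U = Y_S \cup X_1 \cup \cdots \cup X_t \cup Y_E$ of $U$ into maximal contiguous blocks of consecutive indices, where $Y_S$ is the possibly empty block meeting $w_1$ and $Y_E$ the one meeting $w_m$. An inner block $X_i$ is flanked on both sides by non-$U$ indices at which $p > 0$; counted with multiplicity the forced zeros in $X_i$ cause $|X_i|$ sign changes, so if $|X_i|$ is odd then $p$ must have at least one additional zero strictly between $X_i$ and one of its two non-$U$ neighbours. The relevant flanking open intervals, one chosen for each odd $X_i$, are pairwise disjoint, so these extra zeros are distinct. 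Together with the $k$ forced zeros they give at most $d$ zeros, so the number of odd inner blocks is at most $d - k$. The boundary blocks $Y_S$ and $Y_E$ are bordered by a non-$U$ point on one side only, so no parity constraint arises for them.

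For sufficiency, given a decomposition with at most $d-k$ odd inner blocks I would build $p$ greedily. Starting from $p_0(t) = \prod_{w_i \in U}(t - t_i)$, I would scan the gaps of non-$U$ parameters from left to right and, each time the running sign is wrong at the leftmost non-$U$ point of the current gap, multiply by a single factor $(t - s)$ with $s$ placed in the open interval between the previous $U$-block and that gap. Such a factor flips the sign on the current gap and on all later gaps without introducing a zero at any $t_i$. The parity hypothesis ensures that a sign correction is needed at most once per odd $X_i$, so the number of inserted factors is at most $d - k$ and the final polynomial has degree at most $d$.

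The main obstacle is the bookkeeping in the sufficiency step: one must check that the greedy sweep really uses only one extra factor per odd inner $X_i$, that the $Y_S$ and $Y_E$ blocks never force an additional factor (being unconstrained on one side), and that the sign of $p_0$ on each gap can be read off as a prescribed function of the partial sums of block sizes. Writing these signs down explicitly as $(-1)^{k - b_1 - \cdots - b_j}$ on the $j$-th gap reduces the claim to a routine parity argument, and then the leftover freedom to multiply by $-1$ absorbs any global sign mismatch.
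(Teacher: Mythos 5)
There is nothing to compare on the paper's side: Theorem~\ref{thm:shephard} is imported from Shephard's paper with a citation and is nowhere proved in this text, so your argument stands or falls on its own. On its own it is essentially correct, and it is in fact the standard route (the same one used for Gale's evenness condition and its extension to lower-dimensional faces of cyclic polytopes): translate ``\(U\) is the vertex set of a \((k-1)\)-face'' into the existence of a polynomial \(p\) of degree at most \(d\) vanishing on the parameters of \(U\) and of one strict sign on those of \(W\setminus U\) (the affine-independence of any \(\le d+1\) moment-curve points gives that the face is a \((k-1)\)-simplex, and the converse passage from \(p\) back to a supporting hyperplane needs the one-line remark that a point of \(P\) on the hyperplane must be a convex combination of vertices of \(U\) only); then count zeros for necessity and place extra roots for sufficiency. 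One small imprecision in your necessity step: the additional zero forced by an odd inner block \(X_i\) need not lie ``strictly between \(X_i\) and one of its two non-\(U\) neighbours'' --- it could be a repeated root at a point of \(X_i\) or an extra root between two consecutive points of \(X_i\). The correct statement is that the open interval bounded by the two flanking non-\(U\) parameters must contain an even number of zeros counted with multiplicity, hence at least \(\mach{X_i}+1\) when \(\mach{X_i}\) is odd; since these open intervals have pairwise disjoint interiors, the total multiplicity is at least \(k\) plus the number of odd inner blocks, which is what bounds that number by \(d-k\). Your sufficiency sweep is fine as described: corrections are needed exactly when the block separating two consecutive gaps is odd, i.e.\ once per odd inner block, \(Y_S\) and \(Y_E\) never trigger one, and the final multiplication by \(-1\) fixes the global sign; the resulting degree is at most \(k+(d-k)=d\). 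With that one localization repaired, your proof is a complete and correct substitute for the cited result.
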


Shephard's Theorem thus says that the absolute position of points on the moment curve is irrelevant and only their relative order is important. Furthermore, note that all points in $W$ are vertices of $P$. The following corollary helps in proving that two given edges of a hypergraph intersect properly.

\begin{cor}
\label{cor:shephardGen}
In the setting of Theorem~\ref{thm:shephard} assume that $W=U_1\cup U_2$ where $U_1$ and $U_2$ are embedded edges of a $k$-uniform hypergraph. Then these edges do not intersect in a way forbidden by Definition~\ref{def:dembed}, if there exists $j\in\{1,2\}$ such that
\[U_j=Y_S\cup X_1\cup\cdots\cup X_t\cup Y_E\]
holds where at most $d-k$ of the contiguous sets $X_i$ have odd cardinality.
\end{cor}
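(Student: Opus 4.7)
The plan is to view the corollary as a direct consequence of Theorem~\ref{thm:shephard} combined with a standard supporting-hyperplane argument. The stated hypothesis on $U_j$ is exactly Shephard's criterion for $\conv U_j$ to be a $(k-1)$-face of $P = \conv W$, so the task reduces to proving the following: if $U_1, U_2$ are $k$-subsets of $W$ and $F := \conv U_1$ happens to be a face of $P$, then $\conv U_1 \cap \conv U_2 = \conv(U_1 \cap U_2)$ and each $\conv U_j$ has affine dimension $k-1$. These are precisely the two conditions from Definition~\ref{def:dembed}.

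The dimension bullet is almost free: any $d+1$ points on the moment curve in $\RR^d$ are affinely independent, hence every $k$-element subset with $k \leq d+1$ spans a flat of dimension $k-1$. Since $P \subseteq \RR^d$ forces $k \leq d+1$, this applies to both $U_1$ and $U_2$.

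For the intersection property, fix a hyperplane $H$ supporting $P$ with $P \cap H = F$, and let $H^+$ be the closed halfspace containing $P$. The remark following Theorem~\ref{thm:shephard} says that every point of $W$ is a vertex of $P$. Consequently no $w \in W \setminus U_1$ can lie on $H$: otherwise $w \in P \cap H = F$ would force $w$ to be a vertex of $F$, contradicting $w \notin U_1$. Now pick any $x \in \conv U_1 \cap \conv U_2$ and write $x = \sum_{w \in U_2} \lambda_w w$ with $\lambda_w \geq 0$ summing to one. Because $x \in F \subseteq H$, all $w \in U_2$ lie in $H^+$, and the linear functional defining $H$ is strictly positive on $H^+ \setminus H$, only points $w \in U_2 \cap H$ can carry positive weight. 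By the previous paragraph any such $w$ must lie in $U_1$, so $x \in \conv(U_1 \cap U_2)$. The reverse inclusion is trivial, giving the second bullet of Definition~\ref{def:dembed}.

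The only slightly subtle point, and the one I would take care to justify, is the claim that $W \setminus U_1$ is disjoint from $H$: this is what keeps a convex combination of $U_2$ landing on $H$ supported only on shared vertices. It relies crucially on the fact (recorded after Theorem~\ref{thm:shephard}) that every element of $W$ is a vertex of $P$; without that, a stray point of $W \setminus U_1$ could accidentally lie on the supporting hyperplane and destroy the argument. Everything else is a routine use of the defining property of a supporting hyperplane of a polytope.
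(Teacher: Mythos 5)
Your proof is correct and matches the paper's (implicit) justification: the paper states this corollary without proof, treating it as immediate from Theorem~\ref{thm:shephard} together with the standard polytope fact that a point of a face $F=\conv U_1=P\cap H$ written as a convex combination of vertices of $P$ can only use vertices lying on the supporting hyperplane $H$, which here must belong to $U_1\cap U_2$. Your supporting-hyperplane argument, including the observation that all points of $W$ are vertices of $P$ and the Vandermonde reason for the dimension condition, is exactly the routine filling-in of those details, so there is no substantive difference in approach.
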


\begin{proof}
The two edges $U_1$ and $U_2$ do not intersect in a way forbidden by Definition~\ref{def:dembed} if at least one of them is a face of $P=\conv W$, which is the case for $U_j$. 
\end{proof}

\begin{prop}
\label{pro:shephardPlane}
Let $A$, $B$, $C$, and $D$ be four distinct points on the moment curve in $\RR^3$ in arbitrary order. Then the line segments $AB$ and $CD$ do not intersect.
\end{prop}

\begin{proof}
This follows immediately from Corollary~\ref{cor:shephardGen} for the case $k=2$ and $d=3$.
\end{proof}

In the $k=d=3$ case Corollary~\ref{cor:shephardGen} allows zero odd sets $X_i$. Thus, we can easily classify all possible configurations for two edges.

\definecolor{ablue}{cmyk}{1,0,0,.5}
\definecolor{ayell}{cmyk}{0,0,1,.6}
\definecolor{ared}{cmyk}{0,1,.5,.2}

\begin{table}[htbp]
	\centering
		\begin{tabular}{llll}
		\arrayrulecolor{black}
		\toprule
		\textbf{Nr.}  & \textbf{Configuration} & \textbf{Nr.} & \textbf{Configuration}\\
		\midrule
		\textcolor{ablue}{1} & E E E F F F&\textcolor{ablue}{9}& E I F F E\\
		\arrayrulecolor{agray}
		\hline
		\textcolor{ablue}{2} & E E F F E F&\textcolor{ablue}{10}& E F I E F\\
		\hline
		\textcolor{ablue}{3} & E E F F F E&\textcolor{ablue}{11}&E F E I F\\
		\hline
		\textcolor{ablue}{4} & E F F E E F&\textcolor{ayell}{12}& E E F E F F\\
		\hline
		\textcolor{ablue}{5} & E F F E I&\textcolor{ared}{13}& E F E F E F\\
		\hline
		\textcolor{ablue}{6} & E E I F F&\textcolor{ared}{14}& E F E F F E\\
		\hline
		\textcolor{ablue}{7} & E I E F F&\textcolor{ared}{15}& E F E F I\\
		\hline
		\textcolor{ablue}{8}& E E F F I&\textcolor{ared}{16}& E F I F E\\
		\arrayrulecolor{black}
		\bottomrule		
		\end{tabular}
	\caption{Possible configurations for two edges $e$ and $f$ on the moment curve in $\RR^3$ sharing at most one vertex. The vertices of $e\backslash f$ are marked with E, those of $f\backslash e$ marked with F, and a joint vertex is marked with I. Equivalent cases, one being the reverse of the other, are only displayed once.}
	\label{tab:config3}
\end{table}

\begin{lem}
\label{lem:shephard4}
Let $H$ be a 3-uniform hypergraph and $\phi:V(H)\to\RR^3$ such that $\phi$ maps all vertices one-to-one on the moment curve and for each pair of edges $e$ and $f$ sharing at most one vertex, the order of the points $\phi(e\cup f)$ on the moment curve has one of the Configurations~1--12 shown in Table~\ref{tab:config3}. Then $\phi$ is an embedding of $H$. 
\end{lem}

\begin{proof}
Note that the relative order of edges with two common vertices is irrelevant as they always intersect according to Definition~\ref{def:dembed}. Configurations~1--11 follow directly from Corollary~\ref{cor:shephardGen} for $k=d=3$. Thus, we are left with Configuration~12 and it is sufficient to prove the following: For $x_{0,0}<x_{1,0}<x_{0,1}<x_{2,0}<x_{1,1}<x_{2,1}\in\RR$, $\psi:\RR\to\RR^3,\psi(x)=(x,x^2,x^3)$ the moment curve, and $D_i=\{x_{0,i},x_{1,i},x_{2,i}\}$ we have that $\conv\psi(D_0)\cap\conv\psi(D_1)=\varnothing$. Assume otherwise. Note that if two triangles intersect in $\RR^3$ the intersection points must contain at least one point of the border of at least one of the triangles. Thus, without loss of generality, $\conv\{\psi(x_{j_1,0}),\psi(x_{j_2,0})\}\cap\conv\psi(D_1)\neq\varnothing$. However, by Theorem~\ref{thm:shephard} we know that $\conv\{\psi(x_{j_1,0}),\psi(x_{j_2,0})\}$ is a face of the polytope $P=\conv(\{\psi(x_{j_1,0}),\psi(x_{j_2,0})\}\cup\psi(D_1))$ which is a contradiction.
\end{proof}

Note that if we have two edges with vertices on the moment curve as in Configurations~13--16 they generally \emph{do} intersect in a way forbidden by Definition~\ref{def:dembed}. Also, we have presented above all possible cases for the relative order of vertices of two edges on the moment curve. Not all of them will actually be needed in the proofs of the next section.

%However, there is one more possible configuration which is not covered by Shephard's Theorem as both edges are not faces of the polytope of their vertices.

%\input{strong}
\section{Bounding the weak chromatic number}
\label{sec:weak}

For $d,k,n\in\NN$ we define
\[\chi_{d,k}\wea(n)=\max\{\chi\wea(H):H\in\embH{d}{k},\mach{V(H)}=n\}\]
to be the maximum weak chromatic number of a $d$-embeddable $k$-uniform hypergraph on $n$ vertices.

In this section, we give lower and upper bounds on $\chi_{d,k}\wea(n)$.
Obviously, 
$\chi_{d,k}\wea(n)$ is monotonically increasing in $n$ and in $d$ and monotonically decreasing in $k$ if the other parameters remain fixed.
%Note that an equivalent of Lemma \ref{lem:simMono} is not true for weak colorings as the existence of one vertex incident with all edges automatically implies the existence of a weak 2-coloring.

\begin{remark}%[Simple properties of $\chi_{d,k}\wea$]
a) For $k=2$, the results in Tables~\ref{tab:currWeakL} and \ref{tab:currWeakU} follow from the Four Color Theorem and the fact that all graphs are $d$-embeddable for $d\geq 3$.

b) For $d\geq2k-1$, we have $\chi_{d,k}\wea(n)=\lceil n/(k-1)\rceil$ as $K_n^{(k)}$ is $(2k-1)$-embeddable for all $k\in\NN$ by the Menger-Nöbeling Theorem (see \cite[p. 295]{Menger} and \cite{Nobeli}) and $\chi\wea\left(K_n^{(k)}\right)=\lceil n/(k-1)\rceil$.

c) For $d\leq k-2$, we again know $\chi_{d,k}\wea(n)=1$ as $H\in\embH{d}{k}$ cannot have any edge.
\end{remark}

\begin{prop}%[$\chi_{2,3}\wea$]
For all $n\geq3$ we have $\chi_{2,3}\wea(n)\leq2$. (This bound is obviously sharp.)
\end{prop}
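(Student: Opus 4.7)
The plan is to derive a weak 2-coloring of $H$ directly from a proper 4-coloring of the shadow graph $\shadow(H)$. First I would verify that $\shadow(H)$ is planar whenever $H\in\embH{2}{3}$. Given any 2-embedding $\phi\colon V(H)\to\RR^2$ of $H$, the same map $\phi$ serves as a 2-embedding of $\shadow(H)$ in the sense of Definition~\ref{def:dembed} with $k=2$: for any two shadow edges $\{u,v\}$ and $\{u',v'\}$, pick hyperedges $e_1,e_2\in E(H)$ with $\{u,v\}\subseteq e_1$ and $\{u',v'\}\subseteq e_2$, and note that
\[
\conv\phi(\{u,v\})\cap\conv\phi(\{u',v'\})\subseteq\conv\phi(e_1)\cap\conv\phi(e_2)=\conv\phi(e_1\cap e_2).
\]
A short case analysis on $|e_1\cap e_2|$, using the affine independence $\dim\aff\phi(e_i)=2$ provided by Definition~\ref{def:dembed}, shows that this intersection is confined to the common endpoints of the two shadow edges. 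Since $\embH{2}{2}$ coincides with the class of planar graphs via F\'ary's theorem~\cite{Fary}, $\shadow(H)$ is planar.

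Next I would apply the Four Color Theorem~\cite{FCT1,FCT2} to obtain a proper coloring $c\colon V(H)\to\{1,2,3,4\}$ of $\shadow(H)$, and then collapse colors in pairs by setting $c'(v)=1$ if $c(v)\in\{1,2\}$ and $c'(v)=2$ otherwise. For every hyperedge $e=\{a,b,c\}\in E(H)$, all three of the pairs $\{a,b\},\{a,c\},\{b,c\}$ are edges of $\shadow(H)$, so the values $c(a),c(b),c(c)$ are pairwise distinct. Three pairwise distinct elements of $\{1,2,3,4\}$ cannot all lie in the two-element block $\{1,2\}$, nor can they all lie in $\{3,4\}$; hence $c'$ takes both values on $\{a,b,c\}$ and $e$ is not monochromatic. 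Therefore $c'$ is a weak 2-coloring of $H$, proving $\chi\wea(H)\le 2$.

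The only nontrivial point is verifying that $\shadow(H)$ inherits planarity from an embedding of $H$; once that is in place, the remainder is a one-line pigeonhole observation combined with invoking the Four Color Theorem. Sharpness is immediate, since any $H$ containing at least one hyperedge satisfies $\chi\wea(H)\ge 2$, and such $H$ exist in $\embH{2}{3}$ for every $n\ge 3$.
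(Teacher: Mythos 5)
Your proof is correct and follows essentially the same route as the paper: take a proper 4-coloring of the planar shadow graph \(\shadow(H)\) via the Four Color Theorem, merge the four colors into two pairs, and observe that the three pairwise distinct colors on any hyperedge cannot all fall into one pair. The only difference is that you spell out why \(\shadow(H)\) is planar (which the paper simply asserts), and your color-merging rule \(\{1,2\}\mapsto 1\), \(\{3,4\}\mapsto 2\) is just a relabeled version of the paper's reduction modulo 2.
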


\begin{proof}
Let $H\in\embH{2}{3}$ and $V=V(H)$. Then $G=\shadow(H)$ is a planar graph, thus $\chi(G)\leq 4$. Let $\kappa:V\to\{1,2,3,4\}$ be a 4-coloring of $G$. Define 
\[\kappa':V\to\{1,2\},v\mapsto(\kappa(v)\mod 2)+1.\]
In any triangle $\{u,v,w\}$ of $H$ under the coloring $\kappa$ these vertices have exactly three different colors. Therefore, under the coloring $\kappa'$ at least one vertex with color 1 and one vertex with color 2 exists. Thus $\kappa'$ is a valid 2-coloring of $H$.
\end{proof}

\begin{thm}%[Upper bounds for $\chi_{d,d}\wea$]
\label{thm:wup1}
Let $d\geq3$. Then one has
\[\chi_{d,d}\wea(n)\leq\left\lceil\left(\frac{6ed}{(d-1)!}\right)^{\frac{1}{d-1}}n^{\frac{d-2}{d-1}}\right\rceil=\Ol\left(n^{\frac{d-2}{d-1}}\right)\quad\text{as }n\to\infty.\]
This result also holds for piecewise linear embeddings.
\end{thm}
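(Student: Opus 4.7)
The plan is to color the vertices uniformly at random with $c=\lceil (6ed/(d-1)!)^{1/(d-1)}n^{(d-2)/(d-1)}\rceil$ colors, independently across vertices, and then apply the symmetric Lovász Local Lemma to show that with positive probability no edge is monochromatic (which is exactly the condition for a weak $c$-coloring).

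First, for each edge $e\in E(H)$, let $A_e$ be the event that all $d$ vertices of $e$ receive the same color. Since each vertex is colored uniformly and independently,
\[
\Pr[A_e] \;=\; c\cdot c^{-d} \;=\; c^{1-d}.
\]
Second, the event $A_e$ depends only on the colors of the vertices in $e$, so $A_e$ is mutually independent of the family $\{A_f : f\cap e=\varnothing\}$. Therefore, the degree of $A_e$ in the dependency graph is at most the number of edges $f\neq e$ with $f\cap e\neq\varnothing$, and by Corollary~\ref{cor:sizeH} this count is bounded by
\[
D \;\le\; \frac{d(6n^{d-2}-12n^{d-3})}{(d-1)!}-1 \;<\; \frac{6d\,n^{d-2}}{(d-1)!}.
\]

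Third, I verify the symmetric LLL condition $e\cdot\Pr[A_e]\cdot(D+1)\le 1$: by our choice of $c$,
\[
c^{d-1} \;\ge\; \frac{6ed\,n^{d-2}}{(d-1)!} \;\ge\; e\cdot(D+1),
\]
which rearranges to $e\,c^{1-d}(D+1)\le 1$. The Lovász Local Lemma then gives $\Pr\!\bigl[\bigcap_{e\in E(H)} \overline{A_e}\bigr]>0$, so some coloring avoids every monochromatic edge, yielding a valid weak $c$-coloring.

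The only mild subtlety is making sure the bound from Corollary~\ref{cor:sizeH} (which is stated for $\mathcal{H}_{k,k}^{\text{\normalfont\textsf{PL}}}$ with $k\ge 3$, precisely our setting with $k=d\ge 3$) is plugged in correctly so that the constants line up with the claimed $(6ed/(d-1)!)^{1/(d-1)}$; once that arithmetic is checked, the final simplification to $\Ol((n/d)^{(d-2)/(d-1)})$ follows from $(6ed/(d-1)!)^{1/(d-1)}=\Ol(d^{-(d-2)/(d-1)})$, which is the routine Stirling-type estimate. Since the argument uses only the edge-adjacency bound, which is proved for piecewise linear embeddings, the result automatically extends to $\mathcal{H}_{d,d}^{\text{\normalfont\textsf{PL}}}$ as claimed.
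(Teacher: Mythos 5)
Your proposal is correct and follows essentially the same route as the paper: bound the edge-adjacency degree via Corollary~\ref{cor:sizeH}, color uniformly at random, and apply the symmetric Lov\'asz Local Lemma with $p=c^{1-d}$ and the same choice of $c$. The only cosmetic difference is that you simplify $6n^{d-2}-12n^{d-3}$ to $6n^{d-2}$ before verifying the LLL condition, which is harmless.
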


\begin{proof}
Let $H\in\embHP{d}{d}\supseteq\embH{d}{d}$. By Corollary~\ref{cor:sizeH} we know that every edge is adjacent to at most $\Delta=d(6n^{d-2}-12n^{d-3})/(d-1)!-d$ other edges.

We want to apply the Lovász Local Lemma \cite{local,local2} to bound the weak chromatic number of $H$. Let $c\in\NN$. In any $c$-coloring of the vertices of $H$ an edge is called bad if it is monochromatic and good if not. In a uniformly random $c$-coloring the probability for any one edge to be bad is $p=\frac{1}{c^{d-1}}$. Moreover, let $e$ be any edge in $H$ and $F$ be the set of edges in $H$ not adjacent to $e$. Then the events of $e$ being bad and of any edges from $F$ being bad are independent. Thus the event whether any edge is bad is independent from all but at most $\Delta$ other such events.

The Lovász Local Lemma guarantees us that with positive probability all edges are good if $e\cdot p\cdot(\Delta+1)\leq1$. This implies that $H$ is weakly $c$-colorable. Note that
\[e\cdot p\cdot(\Delta+1)\leq1\Leftrightarrow \frac{ed(6n^{d-2}-12n^{d-3})}{(d-1)!}-ed+e\leq c^{d-1}.\]
Choosing an integer
\[c\geq\left(\frac{6ed}{(d-1)!}\right)^{\frac{1}{d-1}}n^{\frac{d-2}{d-1}}\geq\left(\frac{ed(6n^{d-2}-12n^{d-3})}{(d-1)!}-ed+e\right)^{\frac{1}{d-1}},\]
the hypergraph $H$ is $c$-colorable and $\chi\wea(H)\leq c$.
\end{proof}

\begin{thm}%[Upper bounds for $\chi_{d,d}\wea$]
\label{thm:wup2}
Let $d\geq \ell\geq 3$. Then one has
\[\chi_{2d-\ell,d}\wea(n)\leq\left\lceil\left(\frac{e(d-\ell+2)!}{(d-1)!}\right)^{\frac{1}{d-1}}n^{1-\frac{3^{\ell-1-d}}{d-1}}\right\rceil=\Ol\left(n^{1-\frac{3^{\ell-1-d}}{d-1}}\right)\quad\text{as }n\to\infty.\]
This result also holds for piecewise linear embeddings.
\end{thm}

\begin{proof}
By Corollary~\ref{cor:sizeGundert} we know that every edge is adjacent to at most $\frac{(d-\ell+2)!}{(d-1)!}\cdot n^{d-1-3^{\ell-1-d}}-d$ other edges. The rest of the proof is now analogous to the proof of Theorem~\ref{thm:wup1}.
\end{proof}

\begin{thm}
\label{thm:wup3}
Let $d\geq2$. Then one has
\[\chi_{d-1,d}\wea(n)\leq\left\lceil(ed)^{\frac{1}{d-1}}n^{\frac{\lceil(d-1)/2\rceil}{d-1}}\right\rceil=\left\{\begin{array}{ll}
\Ol\left(n^{1/2}\right)&\text{if $d$ is odd}\\
\Ol\left(n^{1/2+1/(2d-2)}\right)&\text{if $d$ is even}
\end{array}\right.\quad\text{as }n\to\infty.\]
\end{thm}

\begin{proof}
By Corollary~\ref{cor:deypach} we know that every edge is adjacent to at most $dn^{\lceil (d-1)/2\rceil}-1$ other edges. The rest of the proof is now analogous to the proof of Theorem~\ref{thm:wup1}.
\end{proof}

By monotonicity, the upper bounds presented here also hold if the uniformity of the hypergraph is larger than stated in Theorems~\ref{thm:wup1}~and~\ref{thm:wup2}. In the remaining part of this section, we now consider lower bounds for the weak chromatic number of hypergraphs.

\begin{thm}%[Lower bounds for $\chi_{3,3}\wea$]
\label{thm:wlo1}
For $n\geq 2$ we have 
 \[\chi_{3,3}\wea(n)\geq\frac{\log n}{2\log\log n}-1=\Omega\left(\frac{\log n}{\log\log n}\right)\quad\text{as }n\to\infty.\]
\end{thm}

\begin{proof}
We first define a sequence of hypergraphs $H_m$ for $m\geq 2$ such that $\chi\wea(H_m)\geq m$.
Set $H_2=K_3^{(3)}$ which has 3 vertices.
Define $H_m$ for $m>2$ iteratively, assuming $\chi\wea(H_{m-1})\geq m-1$.
Take $m$ new vertices $\{v_0,\ldots,v_{m-1}\}$ and $m(m-1)/2$ disjoint copies of $H_{m-1}$, labeled $H_{m-1}^{[0,1]},\ldots,H_{m-1}^{[m-2,m-1]}$.

The edges of $H_m$ shall be all former edges of all $H_{m-1}^{[i,j]}$ together with all edges of the form $\{v_i,v_j,w\}$ where $i<j$ and $w\in H_{m-1}^{[i,j]}$. Assume $H_m$ is weakly $(m-1)$-colorable. Given such a coloring, one color must occur twice in $\{v_0,\ldots,v_{m-1}\}$. Say, these are the vertices $v_{i_1}$ and $v_{i_2}$ where $i_1<i_2$. This color cannot occur anymore in the coloring of $H_{m-1}^{[i_1,i_2]}$. Thus, $H_{m-1}^{[i_1,i_2]}$ must be weakly $(m-2)$-colorable. This is a contradiction and $H_m$ is at least (and obviously exactly) weakly $m$-chromatic.

\begin{figure}[htbp]
	\centering
		\includegraphics[scale=\scalefactor]{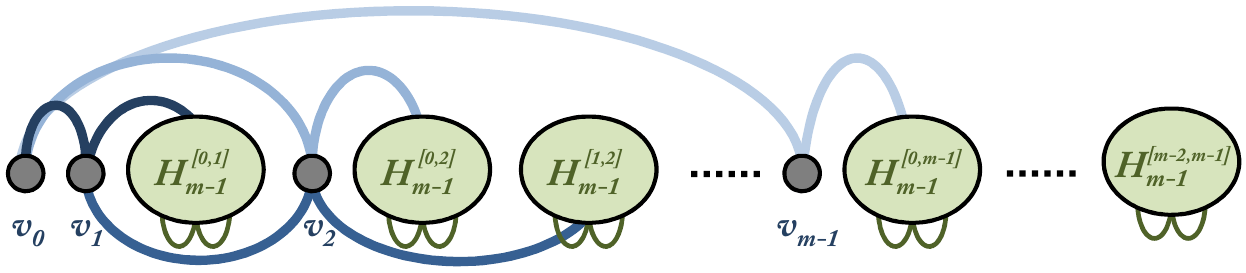}
	\caption{Construction of $H_m$.}
	\label{fig:Hm}
\end{figure}

We now claim that $H_m\in\embH{3}{3}$ for all $m\geq 2$. For that we give a function $f_m:V(H_m)\to\{1,\ldots,n_m\}$ where $n_m$ is the number of vertices of $H_m$. This function defines the order in which the vertices of $H_m$ will be arranged on the moment curve $t\mapsto(t,t^2,t^3)$. Lemma~\ref{lem:shephard4} on possible configurations then guarantees that $H_{m}$ is embeddable via arbitrary points on the moment curve. Note that the absolute position of vertices on the moment curve is not important, only their relative order.

The hypergraph $H_2=K_3^{(3)}$ can be embedded into $\RR^3$ via any three points on the moment curve, so $f_2:V(H_2)\to\{1,2,3\}$ can be chosen arbitrarily. Assume that $f_{m-1}$ has already been defined and that the vertices of $H_{m-1}$ arranged in that order on the moment curve form an embedding. Look at the vertices of $H_m$ as given before. We define $f_m(v_j)=n_{m-1}\cdot j(j-1)/2+j$ for $0\leq j\leq m-1$ and for any $w\in H_{m-1}^{[i,j]}$ with $i<j$ we set $f_m(w)=n_{m-1}\cdot (j(j-1)/2+i)+j+f_{m-1}(w)$. This gives exactly the order shown in Figure~\ref{fig:Hm}.

Now, arrange the vertices of $H_{m}$ on the moment curve in that order and pick any two edges $e_1$ and $e_2$. By Lemma~\ref{lem:shephard4} we can assume that they do not share two vertices.

\textit{Case~1: $e_1$ and $e_2$ are from the same subhypergraph $H_{m-1}^{[i,j]}$.} Then, by induction, they can only intersect according to Definition~\ref{def:dembed} as their relative order reflects that of $f_{m-1}$.

\textit{Case~2: $e_1$ and $e_2$ are from distinct subhypergraphs $H_{m-1}^{[i_1,j_1]}$ and $H_{m-1}^{[i_2,j_2]}$.} Then we are in Case~1 in Table~\ref{tab:config3} and thus they intersect according to Definition~\ref{def:dembed}.

\textit{Case~3: $e_1=\{v_{i_1},v_{j_1},v\}$ where $v\in H_{m-1}^{[i_1,j_1]}$ and $e_2$ is from some subhypergraph $H_{m-1}^{[i_2,j_2]}$.} Without loss of generality, let $e_2=\{w_1,w_2,w_3\}$ and assume  that $f_m(w_1)<f_m(w_2)<f_m(w_3)$. Then, by definition, $i_1<j_1$ and $i_2<j_2$ and all the possible cases of Lemma~\ref{lem:shephard4} are listed in Table~\ref{tab:case1}.

\begin{table}[htbp]
	\centering
		\begin{tabular}{lll}
		\arrayrulecolor{black}
		\toprule
		Relative order of $v$ & Additional Condition& Case number \\
		\midrule
		$f_m(v)<f_m(w_1)$&-&1\\
		$f_m(v)=f_m(w_1)$&-&6\\
		$f_m(w_1)<f_m(v)<f_m(w_2)$&-&12\\
		$f_m(v)=f_m(w_2)$&-&7\\
		$f_m(w_2)<f_m(v)<f_m(w_3)$&-&2\\
		$f_m(v)=f_m(w_3)$&-&8\\
		$f_m(v)>f_m(w_3)$&$f_m(v_{i_1})<f_m(w_3)$&3\\
		$f_m(v)>f_m(w_3)$&$f_m(v_{i_1})>f_m(w_3)$&1\\
		\bottomrule
		\end{tabular}
	\caption{Sub-cases of Case~3 in the proof of Theorem~\ref{thm:wlo1} referring to the corresponding cases of Lemma~\ref{lem:shephard4}.}
	\label{tab:case1}
\end{table}

\textit{Case~4: $e_1=\{v_{i_1},v_{j_1},v\}$ and $e_2=\{v_{i_2},v_{j_2},w\}$.} Again, $i_1<j_1$ and $i_2<j_2$ holds. Without loss of generality assume $j_1\leq j_2$.
%As we have assumed that $e_1$ and $e_2$ do not share two vertices we know that $(i_1,j_1)\neq(i_2,j_2)$.
We then have one of the cases listed in Table~\ref{tab:case2}.
\begin{table}[htbp]
	\centering
		\begin{tabular}{ll}
		\arrayrulecolor{black}
		\toprule
		Relative order of $i_1,i_2,j_1,j_2$ & Case number \\
		\midrule
		$j_1=j_2$ and $i_1\neq i_2$& 10\\
		$j_1=j_2$ and $i_1=i_2$ & two shared vertices\\
		\arrayrulecolor{agray}
		\hline
		$i_1<j_1<i_2<j_2$& 1\\
		$i_1<j_1=i_2<j_2$& 7\\
		$i_1<i_2<j_1<j_2$& 2\\
		$i_1=i_2<j_1<j_2$& 8\\
		$i_2<i_1<j_1<j_2$ & 3\\
		\arrayrulecolor{black}
		\bottomrule
		\end{tabular}
	\caption{Sub-cases of Case~4 in the proof of Theorem~\ref{thm:wlo1} referring to the corresponding cases of Lemma~\ref{lem:shephard4}.}
	\label{tab:case2}
\end{table}

Thus, the order given by $f_m$ provides an embedding of $H_m$. To estimate $n_m$, we use the following recursion
	\begin{align*}
		n_2&=3,\\
		n_m&=m+n_{m-1}\cdot m(m-1)/2\quad \text{for }m>2.
  \end{align*} 
This can be bounded by $n_m\leq m^{2m}=:\hat{n}_m$.
%Then \[\frac{\log \hat{n}_m}{\log\log \hat{n}_m} = 2m\cdot\frac{\log m}{\log(2m\log m)}=2m\cdot(1-o(1))\]
Then \[\frac{\log \hat{n}_m}{\log\log \hat{n}_m} = 2m\cdot\frac{\log m}{\log(2m\log m)}\leq2m\]
and we finally get that
$m\geq \frac{\log \hat{n}_m}{2\log\log \hat{n}_m}\geq\frac{\log n_m}{2\log\log n_m}$.
\end{proof}

Note that by monotonicity also $\chi_{4,3}\wea(n)=\Omega\left(\frac{\log n}{\log\log n}\right)$ holds.
%Figure~\ref{fig:sausage} gives examples for an embedding of $H_3$ and $H_4$.

%\begin{figure}[htbp]
	%\centering
		%\includegraphics[height=0.45\textwidth]{3sausage3.pdf}
		%\quad
		%\includegraphics[height=0.45\textwidth]{3sausage2.pdf}
	%\caption{Examples for an embedding of $H_3$ and $H_4$.}
	%\label{fig:sausage}
%\end{figure}

\begin{thm}%Lower bounds for $\chi_{2d-3,d}\wea$]
\label{thm:wlo2}
Let $d\geq3$. For $n\geq d$ we have
\[\chi_{2d-3,d}\wea(n)\geq\frac{\log n}{2\log\log n}-\frac{d-1}2=\Omega\left(\frac{\log n}{\log\log n}\right)\quad\text{as }n\to\infty.\]
\end{thm}

\begin{proof}
Induction over $d$. The case $d=3$ was shown in Theorem~\ref{thm:wlo1}. Let $d>3$. Suppose we have constructed a family $(H_m^{d-1})_{m\in\NN}$ of hypergraphs in $\embH{2d-5}{d-1}$  such that $\chi\wea(H_m^{d-1})\geq m$ and such that all hypergraphs $H_m^{d-1}$ are embeddable into $\RR^{d-1}$ by vertices on the moment curve with edges intersecting according to Corollary~\ref{cor:shephardGen} (or Lemma~\ref{lem:shephard4} if $d=4$).
 	
Let $H_2^d=K_d^{(d)}$. The hypergraph $H_2^d$ has $d$ vertices, one edge, and is weakly 2-colorable.
Define $H_m^d$ for $m>2$ iteratively, given that $\chi\wea(H_{m-1}^d)\geq m-1$.
For that take one copy of $H_{m-1}^d$ and one copy of $(d-1)$-uniform $H_m^{d-1}$.

The edges of $H_m^d$ shall be all edges of $H_{m-1}^d$ and all edges of the form $(\{v\}\cup e)$ for $v\in V(H_{m-1}^d)$ and $e\in E(H_m^{d-1})$.
Assume that there exists a weak $(m-1)$-coloring of $H_m^d$. Then there has to be at least one monochromatic edge $e\in E(H_m^{d-1})$.
No vertex of $H_{m-1}^d$ can be colored with this color, so its edges must be weakly $(m-2)$-colored. This is a contradiction and thus $\chi\wea(H_m^d)\geq m$.

\begin{figure}[htbp]
	\centering
		\includegraphics[scale=\scalefactor]{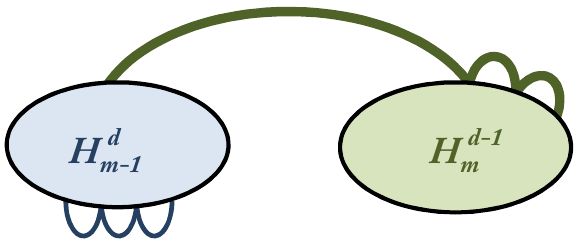}
	\caption{Construction of $H_m^d$.}
	\label{fig:Hmd}
\end{figure}

We now claim that $H_m^d\in\embH{2d-3}{d}$ for all $m\geq 2$. As in the proof of Theorem~\ref{thm:wlo1}, we give a function $f_m^{(d)}:V(H_m^{d})\to\{1,\ldots,n_{m}^{(d)}\}$ where $n_m^{(d)}$ is the number of vertices of $H_m^d$. This defines the order in which the vertices of $H_m^d$ will be arranged on the moment curve $t\mapsto(t,\ldots,t^{2d-3})$. We then use Corollary~\ref{cor:shephardGen} to prove that $H_{m}^d$ is embeddable via arbitrary points on the moment curve. As before, the absolute position of vertices on the moment curve is not important. For a fixed uniformity $d$ and dimension $2d-3$, Corollary~\ref{cor:shephardGen} guarantees that if for two given edges the vertices of at least one edge have at most $d-3$ odd contiguous subsets, they intersect properly according to Definition~\ref{def:dembed}.

For $d=3$ we can set $f_m^{(3)}=f_m$ for all $m\geq 2$, where $f_m$ is as in the proof of Theorem \ref{thm:wlo1}. For $d>3$ we have by assumption that there exists a corresponding family of functions
\[\left(f_m^{(d-1)}:V(H_m^{d-1})\to\{1,\ldots,n_{m}^{(d-1)}\}\right)_m\]
such that the vertices of $H_{m}^{d-1}$ arranged in that order on the moment curve form an embedding. We then have to give an appropriate family of functions $f_m^{(d)}$ for $d$.

$H_2^d$ can be embedded into $\RR^{2d-3}$ via any $d$ points on the moment curve, so $f_2^{(d)}:V(H_2^d)\to\{1,\ldots,d\}$ can be chosen arbitrarily.
Assume that $f_{m-1}^{(d)}$ has already been defined and gives an embedding of $H_{m-1}^{d}$. We define $f_m^{(d)}(v)=f_{m-1}^{(d)}(v)$ for $v\in V(H_{m-1}^d)$ and for any $w\in V(H_m^{d-1})$ we set $f_m^{(d)}(w)=n_{m-1}^{(d)}+f_m^{(d-1)}(w)$. This is also shown in Figure~\ref{fig:Hmd}.

Arrange the vertices of $H_{m}^d$ on the moment curve in that order and pick any two edges $g_1$ and $g_2$.

\textit{Case~1: Both edges are from the subhypergraph $H_{m-1}^{d}$.} Then they intersect in accordance to Definition~\ref{def:dembed} and Corollary~\ref{cor:shephardGen} as their relative order reflects that of $f_{m-1}^{(d)}$.

\textit{Case~2: One edge is from $H_{m-1}^{d}$ and the other of the form $(\{v\}\cup e)$ where $v\in V(H_{m-1}^{d})$ and $e\in E(H_{m}^{d-1})$}. Then both edges have at most one odd contiguous subset (besides the first and last one), which is no problem for $d>3$.

\textit{Case~3: $g_1=(\{v_1\}\cup e_1)$ and $g_2=(\{v_2\}\cup e_2)$.} Then the edges $e_1$ and $e_2$ intersect according to Corollary~\ref{cor:shephardGen} (or Lemma~\ref{lem:shephard4} if $d=4$) and $g_1$ and $g_2$ have at most one more odd contiguous subset than the edges $e_1$ and $e_2$ had in the ordering of $f_m^{(d-1)}$. The last number, by assumption, was bounded from above by $(d-1)-3$ for at least one $e_i$, $i\in\{1,2\}$ (unless $d=4$ and they intersect according to Case~12 in Table~\ref{tab:config3}, see below). So at least one $g_i$ has at most $d-3$ odd contiguous subsets. Thus, the order given by $f_m^{(d)}$ provides an embedding of $H_m^d$.

Note that there is one small exception to Case~3 when $d=4$. Here, $e_1$ and  $e_2$ could be in the relative position of Case~12 in Table~\ref{tab:config3} and consequently have more than $(d-1)-3=0$ odd contiguous subsets. However, this is no problem as in all possible extensions to $g_1$ and $g_2$ at least one of the edges continues to have only one odd contiguous subset (see Table~\ref{tab:configD}).

\begin{table}[htbp]
	\centering
		\begin{tabular}{ll}
		\arrayrulecolor{black}
		\toprule
		\textbf{Nr.}  & \textbf{Configuration}\\
		\midrule
		\textcolor{ablue}{1} &E F E E F E F F\\
		\arrayrulecolor{agray}
		\hline
		\textcolor{ablue}{2} &F E E E F E F F\\
		\hline
		\textcolor{ablue}{3} &I E E F E F F\\
		\arrayrulecolor{black}
		\bottomrule		
		\end{tabular}
	\caption{All possible 4-uniform extensions of Case~12 in Table~\ref{tab:config3} as occurring in the construction of $H_m^4$.}
	\label{tab:configD}
\end{table}

To bound the number of vertices of $H_m^d$ we use
	\begin{align*}
		n_2^{(d)}&=d,\\
		n_m^{(d)}&=n_{m-1}^{(d)}+n_{m}^{(d-1)}\quad \text{for }m>2.
  \end{align*}
Iteratively, we get that $n_m^{(d)}=d+\sum_{r=3}^m n_r^{(d-1)}\leq m\cdot n_m^{(d-1)}\leq\cdots\leq m^{d-3}\cdot \hat{n}_m=m^{2m+d-3}$ and thus
\[\frac{\log n_m^{(d)}}{\log\log n_m^{(d)}} \leq (2m+d-3)\cdot\frac{\log(m)}{\log\left((2m+d-3)\log(m)\right)}\leq 2m+d-3.\]
Hence,
$m\geq\frac{\log n_m^{(d)}}{2\log\log n_m^{(d)}}-\frac{d-3}2$.
\end{proof}

Note that by monotonicity also $\chi_{2d-2,d}\wea(n)=\Omega\left(\frac{\log n}{\log\log n}\right)$ holds.

\section{Conclusions and open questions}
Starting from the Four Color Theorem we have shown that it has no direct analogon for higher dimensions in general. Rather, in almost all cases, the number of colors needed to color a hypergraph embedabble in a certain dimension is unbounded. However, some questions still need to be answered.

Firstly, it would be very interesting to see whether the logarithmic-polynomial difference between lower and upper bounds for the weak coloring case can be improved substantially. If the conjectures by Gundert and Kalai mentioned in Section~\ref{sec:embed} were true, the upper bound for weak colorings could be lowered as follows.

\begin{conj}
Let $k-1\leq d\leq 2k-2$. Then one has
\[\chi_{d,k}\wea(n)=\Ol\left(n^{\frac{\lceil(d-1)/2\rceil}{k-1}}\right)\quad\text{as }n\to\infty.\]
\end{conj}

Further, in the weak coloring case, for $k=d+1$ no examples with an unbounded number of colors needed have yet been found and a finite bound is still possible. Also, the question whether the maximum chromatic number for some fixed $k$, $d$, and $n$ actually differs for linear and piecewise linear embeddings, remains an open problem.

\section*{Acknowledgments}
The authors wish to thank Penny Haxell for helpful discussions. They also would like to thank two anonymous
referees for careful and valuable remarks concerning the presentation of this work, in particular considerably simplifying the treatment of strong colorings.

%\appendix										 %Einleitung der Anhänge
%\listoffigures								 %Abbildungsverzeichnis
\bibliographystyle{amsplain} 	 
\bibliography{hyper}	 		 	 	 %Name der .bib Datei mit der bibliographischen DB
\clearpage

\end{document}